\newcommand{\Deltab}{\bigtriangleup}
\begin{document}

\newtheorem{theorem}{Theorem}  [section]
\newtheorem{proposition}[theorem]{Proposition}
\newtheorem{lemma}[theorem]{Lemma}
\newtheorem{clm}[theorem]{Claim}
\newtheorem{corollary}[theorem]{Corollary}
\newtheorem{conj}[theorem]{Conjecture}
\newtheorem{conjecture}[theorem]{Conjecture}

\title{Generate Delta matroids from matroids}


\author{R\'emi Cocou Avohou}
\address[R.C.A.]{
Max Planck Institut f\"ur Mathematik, Vivatsgasse 7, 53111 Bonn, Germany, \& ICMPA-UNESCO Chair, 072BP50, Cotonou,
Rep. of Benin, \& Ecole Normale Superieure, B.P 72, Natitingou, Benin}
\email{avohou.r.cocou@mpim-bonn.mpg.de}

\author{Brigitte Servatius}
\address[B.S.]{Mathematical Sciences, Worcester Polytechnic Institute, Worcester MA 01609-
2280}
\email{bservat@wpi.edu }

\author{Herman Servatius}
\address[B.S.]{Mathematical Sciences, Worcester Polytechnic Institute, Worcester MA 01609-
2280}
\email{hservat@wpi.edu }

\maketitle

\begin{abstract}
We give necessary and sufficient conditions for two matroids on
the same ground set to be the upper and lower matroid of a $\Delta$-matroid.
\end{abstract}

\section{Matroids and $\Delta$-matroids}
A {\em matroid} $M$ is a finite set $E$ and a collection $\mathcal{B}$ of subsets of $E$
satisfying the condition that if
\begin{enumerate}\renewcommand{\theenumi}{MB}
\item
If $B_1$ and $B_2$ are in $\mathcal{B}$ and $x \in B_1 \setminus B_2$ then there
exists a $y \in  B_2 \setminus B_1$ such that $(B_1 \cup \{ y \} ) \setminus \{ x \} = B_1 \Deltab \{x,y\} \in \mathcal{B}$
\label{BaseAxiom}
\end{enumerate}
Axiom (MB) is called the
{\em basis exchange axiom}. Sets in $\mathcal{B}$ are called {\em bases} of $M$. Subsets of bases are called {\em independent sets}, sets which are not independent are called {\em dependent}, minimal dependent sets are called cycles, and sets containing a basis are called {\em spanning}.

Matroids were introduced by Whitney \cite{whitney} in 1935. Since then many texts on matroid theory have appeared; see \cite{tutte}, or the standard text for the graph theorist \cite{welsh}, the standard resource for the geometer/algebraist \cite{oxley}, an applied approach \cite{andrec}, or
a new text \cite{leopit}.

Replacing the set difference in Axiom~(MB) by the symmetric difference we obtain the symmetric exchange axiom ($\Delta$F) used by
Bouchet~\cite{bouchet2} to define $\Delta$-matroids. A more recent combinatorics formulation introduced in \cite{avoserv1} may be of interest to an interested reader.

A {\em $\Delta$-matroid} $D$ is a finite set $E$ and a collection $\mathcal{F}$ of subsets of $E$
satisfying the condition that if
\begin{enumerate}\renewcommand{\theenumi}{$\Delta$F}
\item If $F_1$ and $F_2$ are in $\mathcal{F}$ and $x \in F_1 \Deltab  F_2$ then there
exists a $y \in  F_2 \Deltab F_1$ such that $F_1 \Deltab \{x,y\} \in \mathcal{F}$.
\label{FeaseAxiom}
\end{enumerate}
Axiom ($\Delta$F) is called the {\em symmetric exchange axiom} and the sets in $\mathcal{F}$ are called the {\em feasible
sets} of $D$.  It is important to note that $y$ may equal $x$, so
$|F_1 \Deltab \{x,y\}| - |F_1| \in \{0, \pm 1 , \pm 2\}$.

$\Delta$-matroids were independently introduced by Dress and Havel \cite{andrea} as matroids, and by Chandasekaran and Kabadi \cite{chandra} as pseudometroids. Bouchet developed the properties of $\Delta$-matroids and related structures in a series of papers on multimatroids \cite{bouchet5, bouchet6, bouchet7, bouchet8}.


It was observed by Bouchet that the bases of every matroid are the feasible sets of $\Delta$-matroid,
and, since (MB) forces all bases of $M$ to
be equicardinal, which in some literature is listed as an axiom, not all $\Delta$-matroids arise in this fashion.
He also noted that there are two obvious matroids associated with every $\Delta$-matroid;
$M_u$, the {\em upper matroid}, whose bases are the feasible sets with largest cardinality, and
$M_l$, the {\em lower matroid}, whose bases are the feasible sets with least cardinality,~\cite{Boucher3}.
It is the point of view of this paper study the $\Delta$-matroid specifically with regard to the
relation it bears to its upper and lower matroids.

If you regard the bases of a matroid to be feasible, then the
upper and lower matroids of the resulting $\Delta$-matroid are the same.
For an example where they are distinct, let $E$ be a set on $n$ elements, $0 < k < n$, and let $ \mathcal{F}$
consist of all subsets of $E$ that have either $k-1$ or $k+1$ elements. Then it is easy to check that
$\mathcal{F}$ is the collection of feasible sets of a $\Delta$-matroid on $E$.  The upper matroid $M_u$ is the uniform matroid
$U_{k+1,n}$, and lower matroid is $U_{k-1,n}$.
However, we also have the following
extreme examples.

\begin{theorem}\label{indy}
Let $M$ be a matroid and let $\mathcal{I}$ be the collection of independent
sets of $M$.  Then $\mathcal{I}$ satisfies the symmetric exchange axiom.
\end{theorem}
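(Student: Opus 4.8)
The plan is to verify axiom~(\ref{FeaseAxiom}) directly. Take two independent sets $I_1, I_2 \in \mathcal{I}$ and an element $x \in I_1 \Deltab I_2$; I must produce $y \in I_2 \Deltab I_1$ with $I_1 \Deltab \{x,y\} \in \mathcal{I}$. Since symmetric difference is commutative, $I_2 \Deltab I_1 = I_1 \Deltab I_2$, so $y$ is sought in the same set that contains $x$. The crucial simplification is that the axiom allows $y = x$, in which case $\{x,y\} = \{x\}$ and $I_1 \Deltab \{x,y\} = I_1 \Deltab \{x\}$ differs from $I_1$ by a single element: it deletes $x$ when $x \in I_1$, or adjoins $x$ when $x \notin I_1$. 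I would therefore split on which side of the symmetric difference $x$ lies.

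First I would treat $x \in I_1 \setminus I_2$. Choosing $y = x$ gives $I_1 \Deltab \{x\} = I_1 \setminus \{x\}$, a subset of the independent set $I_1$ and hence independent; moreover $x \in I_1 \Deltab I_2$ as required. This case is immediate.

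The substantive case is $x \in I_2 \setminus I_1$, so $x \notin I_1$ and $I_1 \Deltab \{x\} = I_1 \cup \{x\}$. If $I_1 \cup \{x\}$ is independent, then $y = x$ works again. Otherwise $I_1 \cup \{x\}$ is dependent and so contains a cycle $C$; because $I_1$ itself is independent, any such cycle must use $x$, giving the fundamental cycle $C \ni x$ with $C \setminus \{x\} \subseteq I_1$. I would then observe that $C$ cannot lie inside $I_2$: since $C$ is dependent and $I_2$ is independent, there is an element $y \in C \setminus I_2$, and as $x \in I_2$ we have $y \neq x$, whence $y \in I_1 \setminus I_2 \subseteq I_1 \Deltab I_2$. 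Deleting this $y$ from $I_1 \cup \{x\}$ destroys the only cycle, so $I_1 \Deltab \{x,y\} = (I_1 \cup \{x\}) \setminus \{y\}$ is independent, completing the verification.

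The main obstacle is this last case, and it rests on the standard matroid fact that when $I_1$ is independent and $I_1 \cup \{x\}$ is dependent there is a unique cycle containing $x$, the deletion of any of whose elements restores independence. Granting this, the only thing left to check is that the cycle is not swallowed by $I_2$, which is forced by the independence of $I_2$. If one were not permitted to quote these facts, I would instead recover the needed augmentation property from axiom~(\ref{BaseAxiom}) applied to bases of the restriction of $M$ to $I_1 \cup I_2 \cup \{x\}$, but the fundamental-cycle route is the cleaner and more self-contained argument.
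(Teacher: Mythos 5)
Your proposal is correct and follows essentially the same route as the paper's own proof: the trivial deletion case when $x \in I_1 \setminus I_2$, then for $x \in I_2 \setminus I_1$ either adjoining $x$ directly or using the fundamental cycle of $x$ in $I_1 \cup \{x\}$, which cannot lie in the independent set $I_2$, to find $y \in I_1 \setminus I_2$ whose removal restores independence. Your write-up is in fact slightly more careful than the paper's (which contains a small typo, writing $I_1 - x + y$ where $I_1 + x - y$ is meant), so nothing further is needed.
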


\begin{proof}
   Let $I_1, I_2 \in \mathcal{I}$ and let $x \in I_1 \bigtriangleup I_2$.

  If $x \in I_1\setminus I_2$, then $I_1 \bigtriangleup \{x,x\} = I_1 - x$ is independent
as required, so assume that $x \in I_2\setminus I_1$.  If $I_1 + x$ is independent, then set
$y = x$, so that  $I_1 \bigtriangleup \{x,y\} = I_1 + x$ is independent.  If, on the other hand,
$I_1 + x$ is dependent, then $I_1 + x$ contains a unique cycle containing $x$, but which is not
contained in $I_2$, so there exists $y \in I_1 \setminus I_2$, so that $I_1 \bigtriangleup \{x,y\}= I_1 - x + y$
is independent.
\end{proof}

So $\mathcal{I}$ are the feasible sets of a $\Delta$-matroid with upper-matroid $M$ and lower matroid with only
the empty basis.  And, not surprisingly, the dual result also holds.

\begin{theorem}\label{spanny}
Let $M$ be a matroid and let $\mathcal{S}$ be the collection of spanning
sets of $M$.  Then $\mathcal{S}$ satisfies the symmetric exchange axiom.
\end{theorem}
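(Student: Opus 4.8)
The plan is to derive Theorem~\ref{spanny} from Theorem~\ref{indy} by matroid duality rather than by running a fresh exchange argument. The observation driving this is that the spanning sets of $M$ are exactly the complements of the independent sets of the dual matroid $M^*$: a set $S$ contains a basis $B$ of $M$ if and only if $E \setminus S$ is contained in the basis $E \setminus B$ of $M^*$, so $S$ is spanning in $M$ precisely when $E \setminus S$ is independent in $M^*$. Writing $\mathcal{I}^*$ for the collection of independent sets of $M^*$, this says $\mathcal{S} = \{\, E \setminus I : I \in \mathcal{I}^* \,\}$.

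First I would isolate the fact that the symmetric exchange axiom is invariant under complementation. The two identities I need are $(E \setminus A) \Deltab (E \setminus B) = A \Deltab B$ and $(E \setminus A) \Deltab \{x,y\} = E \setminus (A \Deltab \{x,y\})$, both of which follow immediately from the membership characterization of symmetric difference. Granting these, if $\mathcal{F}$ is any family satisfying (\ref{FeaseAxiom}) and $\mathcal{F}^c = \{\, E \setminus F : F \in \mathcal{F} \,\}$, then for $G_1 = E \setminus F_1$ and $G_2 = E \setminus F_2$ in $\mathcal{F}^c$ and any $x \in G_1 \Deltab G_2 = F_1 \Deltab F_2$, the element $y \in F_2 \Deltab F_1$ produced by (\ref{FeaseAxiom}) for $\mathcal{F}$ lies in $G_2 \Deltab G_1$ (symmetric difference being commutative) and satisfies $G_1 \Deltab \{x,y\} = E \setminus (F_1 \Deltab \{x,y\}) \in \mathcal{F}^c$. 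Hence $\mathcal{F}^c$ also satisfies the symmetric exchange axiom.

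With these two pieces in place the theorem is immediate: apply Theorem~\ref{indy} to the dual matroid $M^*$ to conclude that $\mathcal{I}^*$ satisfies the symmetric exchange axiom, and then apply the complementation invariance to $\mathcal{F} = \mathcal{I}^*$ to conclude that $\mathcal{S} = (\mathcal{I}^*)^c$ satisfies it as well.

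I do not expect a genuine obstacle here, since commutativity of $\Deltab$ makes the matching of $F_1 \Deltab F_2$ with $F_2 \Deltab F_1$ automatic, and the only place to be slightly careful is the bookkeeping in the two complementation identities. A reader preferring a self-contained argument could instead dualize the proof of Theorem~\ref{indy} directly, replacing independent sets, circuits, and the relation of dependence by their duals in $M^*$; but the duality route above avoids reproving the exchange step altogether.
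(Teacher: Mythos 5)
Your proof is correct, but it takes a genuinely different route from the paper's. The paper proves Theorem~\ref{spanny} directly, by mirroring the case analysis of Theorem~\ref{indy}: for $x \in S_2\setminus S_1$ it adds $x$; for $x \in S_1 \setminus S_2$ it either deletes $x$ (if $S_1 - x$ still spans) or swaps in an element $y \in S_2 \setminus S_1$ restoring spanning. You instead observe that $\mathcal{S}$ is the complement-family of the independent sets of the dual matroid $M^*$, check that the symmetric exchange axiom is preserved under complementation (using $(E\setminus A)\Deltab(E\setminus B)=A\Deltab B$ and $(E\setminus A)\Deltab\{x,y\}=E\setminus(A\Deltab\{x,y\})$), and then quote Theorem~\ref{indy} for $M^*$. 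Both arguments are sound. Your route buys economy and conceptual clarity: it isolates the self-duality of the symmetric exchange axiom, a fact the paper itself invokes later (without proof) when it defines $D^\star$ via complementation of feasible sets; proving it here once would let the later passage cite it. The cost is that you import standard matroid duality facts (that the bases of $M^*$ are the complements of the bases of $M$, hence that $S$ spans $M$ iff $E\setminus S$ is independent in $M^*$), whereas the paper's proof is elementary and self-contained, using only the definitions of spanning set and basis. Either version is acceptable; yours is arguably the more structural of the two.
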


\begin{proof}
   Let $S_1, S_2 \in \mathcal{S}$ and let $x \in S_1 \bigtriangleup S_2$.

  If $x \in S_2\setminus S_1$, then $S_1 \bigtriangleup \{x,x\} = S_1 + x$ is spanning
as required, so assume that $x \in S_1 \setminus S_2$.  If $S_1 - x$ is spanning, then set
$y = x$, so that  $S_1 \bigtriangleup \{x,x\} = S_1 - x$ is spanning.  If, on the other hand,
$S_1 - x$ is not spanning, then since $S_2$ is spanning it contains an element $y \not \in S_1$ so that
$S_1 - x + y = S_1 \bigtriangleup \{x,y\}$ spans.
\end{proof}

So any matroid $M$ may be naturally viewed as a $\Delta$-matroid in three different ways, namely by considering $\mathcal{F} = \mathcal{B}$ where $M_u = M_l = M$, or $\mathcal{F} = \mathcal{I}$, where $M_u = M$ and the lower matroid has rank zero, or $\mathcal{F}$ equals the collection of spanning sets of $M$ where $M_l = M$ and the upper matroid has rank $|E|$.

The feasible sets of cardinality between the ranks of $M_l$ and $M_u$ can also be related to the upper and lower
matroids.

\begin{theorem} \label{uplow}
If $D$ is a  $\Delta$-matroid and $F$ is a feasible set of $D$, then $F$ is spanning in the lower matroid
and independent in the upper matroid.
\end{theorem}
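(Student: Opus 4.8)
The plan is to recast both assertions in terms of cardinality. Write $r_l$ and $r_u$ for the ranks of $M_l$ and $M_u$, so that the bases of $M_l$ are exactly the feasible sets of minimum size $r_l$ and the bases of $M_u$ are exactly the feasible sets of maximum size $r_u$. Then ``$F$ is spanning in $M_l$'' means precisely that $F$ contains some minimum feasible set, and ``$F$ is independent in $M_u$'' means precisely that $F$ is contained in some maximum feasible set. So I would reduce the theorem to two extension statements: every feasible $F$ contains a feasible set of size $r_l$, and every feasible $F$ is contained in a feasible set of size $r_u$.

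For the upper-matroid half, I would argue by an extremal choice. Among all maximum feasible sets $B$, choose one for which $|F \setminus B|$ is as small as possible, and suppose for contradiction that $F \setminus B \neq \emptyset$. Pick $x \in F \setminus B$; then $x \in F \bigtriangleup B$, so applying the symmetric exchange axiom~(\ref{FeaseAxiom}) with $F_1 = B$ and $F_2 = F$ produces $y \in F \bigtriangleup B$ with $B \bigtriangleup \{x,y\}$ feasible. Now I case on the location of $y$: if $y = x$ or $y \in F \setminus B$ then $B \bigtriangleup \{x,y\}$ has size $r_u+1$ or $r_u+2$, contradicting the maximality of $r_u$; and if $y \in B \setminus F$ then $B' = B \bigtriangleup \{x,y\} = B + x - y$ is again a maximum feasible set but satisfies $|F \setminus B'| = |F \setminus B| - 1$, contradicting the minimal choice of $B$. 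Hence $F \subseteq B$, and $F$ is independent in $M_u$.

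The lower-matroid half is exactly dual, and I would run the same argument with the roles of ``largest'' and ``smallest'' interchanged. Among all minimum feasible sets $B$, choose one minimizing $|B \setminus F|$, suppose $B \setminus F \neq \emptyset$, and pick $x \in B \setminus F$. Applying~(\ref{FeaseAxiom}) with $F_1 = B$, $F_2 = F$ yields $y \in F \bigtriangleup B$ with $B \bigtriangleup \{x,y\}$ feasible; the cases $y = x$ and $y \in B \setminus F$ now produce sets of size $r_l - 1$ or $r_l - 2$, contradicting minimality of $r_l$, while $y \in F \setminus B$ gives a minimum feasible set $B - x + y$ with strictly smaller $|B \setminus F|$, contradicting the extremal choice. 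Thus $B \subseteq F$ and $F$ is spanning in $M_l$.

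The one point that needs care --- and the reason a naive induction on cardinality does not work --- is that~(\ref{FeaseAxiom}) only guarantees the existence of \emph{some} $y$, and the resulting change in size lies in $\{0, \pm 1, \pm 2\}$, so one cannot simply insist on stepping the cardinality up or down by one. The extremal choice of $B$ is exactly what tames this: it is arranged so that every admissible position for $y$ either overshoots the cardinality bound defining $M_u$ (respectively $M_l$) or else contradicts the minimality of the chosen witness. Getting that bookkeeping right is the only real obstacle.
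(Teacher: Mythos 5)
Your proof is correct and follows essentially the same route as the paper's: choosing a maximum (resp.\ minimum) feasible set minimizing $|F\setminus B|$ (resp.\ $|B\setminus F|$) is the same extremal choice as the paper's ``basis with largest possible intersection with $F$,'' and the subsequent application of the symmetric exchange axiom and the case analysis on the location of $y$ coincide with the paper's argument.
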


\begin{proof}
Let $F$ be a feasible set and let $L$ be the basis of $M_l$ with the largest possible intersection with $F$.
If  $L \nsubseteq  F$, there exists $x\in L \setminus F$. By the symmetric exchange axiom, there must be a $y\in L \Deltab F$
such that $L \Deltab \{ x,y \} $ is feasible. If $y \in L \setminus F$, then $|L \Deltab \{ x,y \}| < |L|$, which is impossible, so $y \in F\setminus L$ and
 $L \Deltab \{ x,y \}$ is a basis of $M_l$ which intersects $U$ in more elements than $L$ does, a contradiction.
Therefore $L \subseteq  F$, that is, $F$ is spanning in $M_l$.

For independence, let $F$ be a feasible set and  let $U$ be a basis of $M_u$ with largest possible intersection with $F$.
If $|F| \nsubseteq  |U|$,  there exists $x \in F \setminus U$.
By the symmetric exchange axiom, there must be a $y\in F \Deltab U$
such that $U \Deltab \{ x,y \} $ is feasible. If $y \in F \setminus U$, then $|U \Deltab \{ x,y \}| > |U|$, which is impossible,
so $y \in U\setminus F$ and $U \Deltab \{ x,y \}$ is a basis of $M_u$ which intersects $F$ in more elements than $U$ does, a contradiction.
Therefore $F$ is contained in some basis of $M_u$ and is independent in $M_u$.
\end{proof}

In particular, Theorem~\ref{uplow} leads to a necessary condition on two matroids of different ranks on a set $E$ to be
the upper and lower matroids of a $\Delta$-matroid.
\begin{corollary} \label{uplow01}
If $D$ is a  $\Delta$-matroid on a set $E$ with upper matroid $M_u$ and lower matroid $M_l$, then
every basis of $M_u$ is a spanning set of $M_l$ and every basis of $M_l$ is independent in $M_u$.
\end{corollary}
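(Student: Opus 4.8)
The plan is to derive this statement as an immediate consequence of Theorem~\ref{uplow}. The essential observation is that both families of bases under consideration are themselves feasible sets of $D$: by the definition of the upper and lower matroids, the bases of $M_u$ are precisely the feasible sets of largest cardinality, and the bases of $M_l$ are precisely the feasible sets of least cardinality. In particular, every basis of $M_u$ and every basis of $M_l$ is a feasible set, so Theorem~\ref{uplow} applies to each of them directly.

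First I would take an arbitrary basis $B$ of $M_u$. Since $B$ is a feasible set of $D$, Theorem~\ref{uplow} asserts that $B$ is spanning in $M_l$, which is exactly the first claim. Next I would take an arbitrary basis $B'$ of $M_l$; since $B'$ is likewise a feasible set, the same theorem asserts that $B'$ is independent in $M_u$, which is the second claim. Quantifying over all such $B$ and $B'$ yields the corollary.

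Because the argument is a straightforward specialization of Theorem~\ref{uplow} to the extremal feasible sets, I do not expect any genuine obstacle. The only point requiring care is the bookkeeping of roles: the bases of $M_u$ (the large feasible sets) are the ones shown to be spanning in $M_l$, while the bases of $M_l$ (the small feasible sets) are the ones shown to be independent in $M_u$. This crossing of roles between the two matroids is easy to state backwards, so I would be careful to pair each family of bases with the correct conclusion of Theorem~\ref{uplow}.
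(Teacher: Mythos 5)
Your proposal is correct and matches the paper's intent exactly: the corollary is stated as an immediate specialization of Theorem~\ref{uplow}, since the bases of $M_u$ and of $M_l$ are by definition feasible sets (the maximum- and minimum-cardinality ones, respectively), and the paper gives no further argument. Your care in pairing each family of bases with the correct conclusion is exactly the only point that needs attention.
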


The converse of Corollary~\ref{uplow01} is the topic of the next section.

\section{Characterizing upper and lower matroids}
If we wanted to study a $\Delta$-matroid $D$, and were given its upper and lower matroids, $M_u$ and $M_l$, it would be convenient if we could simply
compute the rest of the feasible sets from these extreme classes, but that is impossible, as the simple feasible
collections $\{ \{a,b\}, \{a\}, \{b\}, \emptyset \}$ and
$\{ \{a,b\}, \emptyset \}$ demonstrate, since they have the same upper and lower matroids.

In general, if the rank functions satisfy $\mbox{rk}(M_u) - \mbox{rk}(M_u) > 2$, it is impossible for there to be no feasible sets of intermediate
cardinality, since symmetric exchange will force the existence of feasible sets both of cardinality one or two greater than minimum,
and of cardinality one or two less than maximum.  It is more likely to find a general construction by declaring as many of the sets of intermediate
cardinality to be feasible as possible, within the restrictions of Theorem~\ref{uplow}.  This construction works
in the examples of
Theorems~\ref{indy} and~\ref{spanny} as well as in the following theorem relating the cycle matroid of a simple graph $G = (V, E)$ on the ground set $E$ and its ($2$-dimensional generic) rigidity matroid, where a set $F\subseteq E$ of edges is independent if $|F'|\leq 2|V (F')| - 3$ holds for all non-empty subsets $F' \subseteq F$ . The set $V (F)$ here denotes set of endpoints of edges in $F$ . Edge sets violating this inequality are called overbraced. For an introduction to combinatorial rigidity see \cite{servatius}.

\begin{theorem}
Let $G = (V,E)$ be a connected graph and consider the connectivity- or cycle-matroid $M_c$ and the $2$-dimensional
generic rigidity matroid $M_r$ on the edge set $E$ of $G$.
Let $\mathcal{F}$ denote the collection of edge sets which induce graphs containing a spanning tree, but are not over-braced.
Then $\mathcal{F}$ satisfies the symmetric exchange property.
\end{theorem}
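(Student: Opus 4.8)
The plan is to identify $\mathcal{F}$ explicitly as the family of edge sets that are \emph{spanning} in the cycle matroid $M_c$ and \emph{independent} in the rigidity matroid $M_r$; thus $\mathcal{F}$ is exactly the ``maximal intermediate'' family between the lower matroid $M_l=M_c$ and the upper matroid $M_u=M_r$ permitted by Theorem~\ref{uplow} (note that every spanning tree is a forest and hence not over-braced, so $\mathcal{F}\neq\emptyset$). Given $F_1,F_2\in\mathcal{F}$ and $x\in F_1\bigtriangleup F_2$, I would verify ($\Delta$F) by a case split that mirrors the proofs of Theorems~\ref{indy} and~\ref{spanny}: when $x\in F_1\setminus F_2$ I try to \emph{delete} $x$ from $F_1$, and when $x\in F_2\setminus F_1$ I try to \emph{add} $x$ to $F_1$, attempting $y=x$ first in each case.

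First the easy sub-cases. If $x\in F_1\setminus F_2$ and $F_1-x$ is still spanning (equivalently $x$ is not a bridge of the graph $F_1$), then $y=x$ works, since $F_1-x\subseteq F_1$ remains independent in $M_r$. Dually, if $x\in F_2\setminus F_1$ and $F_1+x$ is independent in $M_r$ (equivalently $x$ is not in the rigidity closure $\mathrm{cl}_r(F_1)$), then $y=x$ works, since $F_1+x\supseteq F_1$ remains spanning in $M_c$. This leaves two genuinely two-sided sub-cases.

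Suppose $x\in F_1\setminus F_2$ is a bridge of $F_1$. Deleting it splits $V$ into two parts $A$ and $B$, and $x$ is the only edge of $F_1$ across the $(A,B)$ cut. Since $F_2$ is spanning it has an edge crossing this cut, and as $x\notin F_2$ that edge lies in $F_2\setminus F_1$; call it $y$. Then $F_1-x+y$ reconnects $A$ and $B$ and is again spanning. The point I would stress is that independence is automatically preserved: every rigid component of $F_1-x$ is connected, hence lies entirely within $A$ or within $B$, so the endpoints of the crossing edge $y$ lie in different rigid components. Thus $y\notin\mathrm{cl}_r(F_1-x)$ and $F_1-x+y=F_1\bigtriangleup\{x,y\}$ is independent in $M_r$, so this $y$ is feasible. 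Now suppose $x\in F_2\setminus F_1$ with $x\in\mathrm{cl}_r(F_1)$. Then $F_1+x$ contains a unique $M_r$-circuit $C$ with $x\in C$, and deleting any $y\in C\setminus\{x\}$ restores independence. I would choose $y\in(C\setminus\{x\})\cap(F_1\setminus F_2)$; such a $y$ exists, for otherwise $C\setminus\{x\}\subseteq F_2$ together with $x\in F_2$ would give $C\subseteq F_2$, contradicting independence of $F_2$. It remains to see that $F_1+x-y$ is still spanning, i.e.\ that $y$ is not a bridge of $F_1+x$.

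This last step is where the structure of rigidity circuits is essential, and is the main obstacle I would expect. A Laman count shows that a circuit $C$ of $M_r$ is $2$-connected: if $C$ were disconnected or had a cut vertex, splitting its edges into parts $C_1,C_2$, then each proper part would satisfy $|C_i|\le 2|V(C_i)|-3$, forcing $|C|\le 2|V(C)|-4$, whereas a circuit has $|C|=2|V(C)|-2$. Being $2$-connected, every edge of $C$ lies on a graph-cycle inside $C\subseteq F_1+x$, so $y$ is not a bridge and $F_1+x-y=F_1\bigtriangleup\{x,y\}$ is spanning, hence feasible. The two structural facts used here---that rigid components refine connected components, and that rigidity circuits are $2$-connected---are precisely the special features of $M_c$ and $M_r$ that make the maximal construction succeed, and carefully verifying them (including checking that the degenerate small cases $|V(C)|<4$ cannot occur for simple graphs) is where the real work lies.
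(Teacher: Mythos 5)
Your proof is correct and follows essentially the same route as the paper: the same four-way case split (delete vs.\ add, with $y=x$ attempted first), resolving the bridge case via an $F_2$-edge across the cut and the dependent case via an edge of the unique rigidity circuit in $F_1\setminus F_2$, with both two-sided cases resting on the $2$-connectivity of rigidity circuits. The only difference is that you supply the Laman-count justification for that $2$-connectivity, which the paper merely asserts.
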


\begin{proof}
Let $F, F' \in \mathcal{F}$ and let $x \in F \setminus F'$.  If $(V,F - x)$ is connected, then choose $y = x$, and
$F \bigtriangleup \{x,x\} = F \setminus \{x\}$ contains a spanning tree and is not over-braced.
Otherwise, $F-x$ is disconnected, and $x$ is a bridge of $(V,F)$.  Since $(V,F')$ contains a spanning tree, and
does not contain $x$, it contains an edge $y$ so that every spanning tree of $(V,F-x+y)$ contains $y$.
Moreover,  $G(V,F-x+y)$ cannot be over-braced, since  $G(V,F-x)$ is not over-braced and no rigidity cycle can
contain a bridge.

Now we must consider the case when $x \in F' \setminus F$.  If $(V,F + x)$ is not over-braced, we can choose $y = x$, and
$F \bigtriangleup \{x,y\} = F + \{x\}$ has the desired property.  Otherwise, $(V,F+x)$ contains a unique rigidity cycle, which is necessarily
edge $2$-connected.  Since the set of edges of that rigidity cycle is not contained in $F'$, there is an edge $y \in F-F'$ whose deletion
does not disconnect it, and so does not increase the the number of connected components of $(V,F + y)$, but whose deletion does leave
$(V,F+y)$ no longer over-braced.  So $F+y-x$ has the desired property.
\end{proof}

This proof depended on the fact that the graph of a rigidity cycle must be a $2$-connected graph, or, equivalently, that the
cycle matroid on those edges is connected.  This is not true in general, even if $M_u$ and $M_l$ are
the upper and lower matroids of a $\Delta$-matroid, as the
following example shows.
%
%
%
%
%
Consider $E = \{1,2,3,a,b,c\}$, $M_u = U_{5,6}(E)$,
$M_l  = U_{2,3}(\{1,2,3\}) \oplus U_{2,3}(\{a,b,c\})$.
Then it is easy to check that $M_u$, $M_l$ are the upper and lower matroids of a delta-matroid $D$, with feasible sets
$\{B_1 \cup B_2 \mid B_1 \in \mathcal{B}(M_u), B_2 \in \mathcal{B}(M_l)\}$.  Here the upper matroid is a cycle,
and the lower matroid is disconnected.
%
%
%
%
%
%
%
%

We would like to have that every cycle in $M_u$ is a union of cycles in $M_l$, which is not necessarily the
case even if
$M_u$ and $M_l$ are matroids such that every basis of $M_l$ is independent in $M_u$, and every basis of $M_u$ is spanning in $M_l$,
as the following example demonstrates.
Consider the cycle matroids of multigraphs of Figure~\ref{county01}.
\begin{figure}[htb]
   \includegraphics[scale=.6]{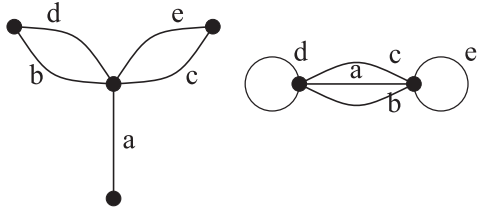}
   \caption{Two graphs on the same edge  set.\label{county01}}
\end{figure}
Every basis of $M_u$, the matroid for the graph on the left, contains $a$ and so spans the matroid $M_l$, for the
graph on the right;  and each of the bases of $M_l$ have one element and so are independent in $M_u$, which has no loops.
However the cycle $\{d,b\}$ of $M_u$ is not a union of cycles of $M_l$.
Moreover, $M_u$ and $M_l$ cannot be the
upper and lower matroids of any $\Delta$-matroid.  Consider $\{a,d,e\}$ and $\{b\}$.  Then
$a \in \{a,d,e\} \bigtriangleup \{b\}$, and, if there was a $\Delta$-matroid, there would exist a
$y \in \{a,d,e\} \bigtriangleup \{b\} = \{a,b,d,e\}$ so that $\{a,d,e\} \Delta \{a,y\}$ is feasible.
But $y \not \in  \{a,d,e\}$ since $\{d,e\}$ is not lower spanning, and $y \neq b$ since $\{b,d,e\}$ is not
upper-independent.  So no choice of $y$ could give a feasible set.

But we do have the following result, which gives us a new necessary condition
and will allow us to characterize upper and lower matroids of $\Delta$-matroids.
\begin{theorem}
Let $D$ be a delta matroid with ground set $E$, with upper matroid $M_u$ and lower matroid $M_l$.
Then every cycle in $M_u$ is a union of cycles in $M_l$.
\end{theorem}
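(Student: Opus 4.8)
The plan is to reformulate the conclusion and then force a failure of the symmetric exchange axiom. Since a set is a union of cycles of $M_l$ exactly when it has no coloop in the restriction of $M_l$ to it, it suffices to show that for every $x$ in an $M_u$-cycle $C$ one has $x\in\mathrm{cl}_{M_l}(C\setminus x)$. I argue by contradiction: suppose $x\in C$ but $x\notin\mathrm{cl}_{M_l}(C\setminus x)$. Then there is an $M_l$-cocircuit $D$, namely the complement of an $M_l$-hyperplane $H\supseteq C\setminus x$ with $x\notin H$, meeting $C$ only in $x$; that is, $D\cap C=\{x\}$ and $C\setminus x\subseteq H$. Throughout I would use the two one-sided facts of Theorem~\ref{uplow}: every feasible set is independent in $M_u$ and spanning in $M_l$; in particular no feasible set contains all of $C$, and every feasible set meets $D$.

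It is worth isolating the easy half first. If $C$ happens to be $M_l$-independent, extend it to an $M_l$-basis $L\supseteq C$ (feasible) and take an $M_u$-basis $U\supseteq C\setminus x$ with $x\notin U$ (feasible), so $C$ is the fundamental $M_u$-circuit of $x$ over $U$. Applying (\ref{FeaseAxiom}) to $L,U$ at $x\in L\setminus U$ admits no partner: adding $x$ recreates $C$ and destroys $M_u$-independence, while every removal leaves $C\setminus x\subseteq L$ inside $U\Deltab\{x,y\}$, again recreating $C$. This contradiction shows that an $M_u$-cycle is never $M_l$-independent, which is the statement in the special case where $C\setminus x$ spans $C$ trivially.

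For the general argument I would build a sharp obstruction on the lower side. Choose a basis $B_H$ of $H$ in $M_l$; then $F_1:=B_H\cup\{x\}$ is an $M_l$-basis, hence feasible, with $F_1\cap D=\{x\}$ and $\mathrm{cl}_{M_l}(F_1\setminus x)=H$. Every admissible partner $y$ for $x$ out of $F_1$ is then forced into $D$: deleting $x$, or $x$ together with another element of $F_1$, lands inside $H$ and so is not $M_l$-spanning, hence not feasible; and a replacement $(F_1\setminus x)\cup\{y\}$ is $M_l$-spanning only if $y\in D$. Thus the good set $G:=\{\,y : F_1\Deltab\{x,y\}\in\mathcal F\,\}$ satisfies $G\subseteq D$ and $G\cap F_1=\emptyset$. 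If $G\cup\{x\}\subsetneq D$, then $E\setminus(G\cup\{x\})$ still meets every $M_l$-cocircuit (the only cocircuit inside $D$ is $D$ itself), so it contains an $M_l$-basis $F_2$, which is feasible, avoids $x$, and is disjoint from $G$. Applying (\ref{FeaseAxiom}) to $F_1,F_2$ at $x\in F_1\Deltab F_2$ produces a partner $y\in(F_1\Deltab F_2)\cap G\subseteq F_2\cap G=\emptyset$, which is impossible, proving the claim outside the degenerate case $G=D\setminus\{x\}$.

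The main obstacle is exactly this degenerate case, where $B_H\cup\{y\}$ is feasible for every $y\in D$, equivalently $\mathrm{cl}_{M_u}(B_H)\cap D=\emptyset$, so that the cocircuit $D$ no longer blocks the exchange and the cycle $C$ must re-enter. Here I would exploit $x\in\mathrm{cl}_{M_u}(C\setminus x)$ with $C\setminus x\subseteq H$, upgrading the choice of $B_H$ to contain a maximal $M_l$-independent subset of $C\setminus x$ so that $\mathrm{cl}_{M_u}(F_1\setminus x)$ captures some element of $D\setminus\{x\}$; this removes that element from $G$ and returns us to the previous case. Failing a direct choice I would induct on $|E|$ or on $\mathrm{rk}(M_u)-\mathrm{rk}(M_l)$, passing to a suitable minor in which $x$ is a loop of the upper matroid but not of the lower matroid and invoking the easy half above. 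Coupling the $M_u$-closure of a basis of an $M_l$-flat to the cocircuit $D$ is the delicate point, and it is precisely where the full strength of (\ref{FeaseAxiom}) is needed beyond the conditions of Corollary~\ref{uplow01}.
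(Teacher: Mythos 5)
Your ``easy half'' is correct and self-contained: taking $F_1=U$ an $M_u$-basis containing $C\setminus x$ and $F_2=L$ an $M_l$-basis containing $C$, every candidate $U\bigtriangleup\{x,y\}$ with $y\in L\bigtriangleup U$ contains $C$, so symmetric exchange fails and no $M_u$-cycle can be $M_l$-independent. But this is strictly weaker than the theorem, and the machinery you set up for the general case does not close the gap: your ``non-degenerate case'' $G\cup\{x\}\subsetneq D$ is vacuous. For every $y\in D\setminus\{x\}$ the set $F_1\bigtriangleup\{x,y\}=B_H\cup\{y\}$ is a basis of $M_l$ (a basis of the hyperplane $H$ together with one element outside $H$), and bases of $M_l$ are by definition feasible sets of $D$. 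Hence $G=D\setminus\{x\}$ always, for \emph{every} choice of $B_H$; in particular the proposed repair in the degenerate case --- re-choosing $B_H$ so that $\mathrm{cl}_{M_u}(F_1\setminus x)$ captures an element of $D$ and thereby removes it from $G$ --- cannot succeed, because $B_H\cup\{y\}$ remains an $M_l$-basis, hence feasible (and automatically $M_u$-independent), no matter what. More structurally: to contradict (\ref{FeaseAxiom}) from the pair $(F_1,F_2)$ you need a feasible $F_2$ with $x\notin F_2$ and $(F_2\setminus F_1)\cap G=\emptyset$, which with $G=D\setminus\{x\}$ forces $F_2\cap D=\emptyset$; but every feasible set is $M_l$-spanning (Theorem~\ref{uplow}) and therefore meets the cocircuit $D$. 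So pivoting on an $M_l$-basis can never witness a failure of exchange at $x$ --- the exchange always succeeds within the $M_l$-bases themselves --- and the whole weight of the proof falls on the unproved fallback sketch.

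The obstruction has to be located in a feasible set that contains $C\setminus\{x\}$, not in a minimum one. The paper does this by restricting $D$ to $C$: in the restriction the upper matroid is the single cycle $C$, so $C\setminus\{e\}$ is an upper basis and hence feasible, and if $e$ lay in no lower cycle inside $C$ then $C\setminus\{e\}$ would not be lower-spanning in the restriction, contradicting Theorem~\ref{uplow}. Your own fallback (``pass to a minor in which $x$ is a loop of the upper matroid but not of the lower matroid and invoke the easy half'') is in fact the germ of this argument, but it is left as an intention rather than carried out, and the identification of the upper and lower matroids of the minor with the corresponding minors of $M_u$ and $M_l$ --- the one point that genuinely needs an argument --- is not addressed. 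As written, the proposal establishes only that $M_u$-cycles are $M_l$-dependent and leaves the theorem unproved.
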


{\em Proof.} Let $C$ be a cycle in $M_u$, and consider the restriction 
of $D$ to
$C$, see \cite{bouchet4}.
In this restriction, the upper matroid is a single cycle $C$.
If the lower matroid is not a union of lower cycles, then there is an edge
$e$ which is contained in no lower cycle, so $C \setminus \{e\}$
which is an upper basis, hence feasible, is not lower spanning,
a contradiction.
\hfill $\Box$

It is easy to show that this necessary condition is stronger than the earlier nec-
essary conditions, since if every cycle in $M_u$ is a union of cycles in $M_l$ then every basis of $M_l$ is independent in $M_u$, and every basis of $M_u$ is spanning in $M_l$. In fact, we have necessary and sufficient conditions for two matroids to be the upper and lower matroids of a
$\Delta$-matroid.

%
%
%
%
%
%

\begin{theorem}
Let $M_u$ and $M_l$ be matroids on the same ground set $E$.  Then $M_u$ and $M_l$ are the upper and lower matroids
of a $\Delta$-matroid if and only if 
every cycle in $M_u$ is a union of cycles in $M_l$.
\end{theorem}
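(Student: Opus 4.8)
The necessity of the cycle condition is exactly the content of the preceding theorem, so the plan is to establish sufficiency: assuming every cycle of $M_u$ is a union of cycles of $M_l$, I would construct a $\Delta$-matroid whose upper and lower matroids are $M_u$ and $M_l$. Following the heuristic of declaring feasible as many intermediate sets as Theorem~\ref{uplow} permits, I would set
$$\mathcal{F} = \{ F \subseteq E : F \mbox{ is spanning in } M_l \mbox{ and independent in } M_u \}.$$
Granting for the moment that $\mathcal{F}$ satisfies $(\Delta F)$, the identification of the extreme matroids is routine: an $M_u$-independent set has at most $\mbox{rk}(M_u)$ elements, with equality exactly for $M_u$-bases, and (as already observed from the cycle condition) every $M_u$-basis is $M_l$-spanning, so the maximum-cardinality members of $\mathcal{F}$ are precisely the bases of $M_u$; dually the minimum-cardinality members are the bases of $M_l$. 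In particular $\mathcal{F} \neq \emptyset$, and $D = (E, \mathcal{F})$ has upper matroid $M_u$ and lower matroid $M_l$.

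The engine of the argument is the following reformulation of the hypothesis, which I would prove first: if every $M_u$-circuit is a union of $M_l$-circuits, then $\mathrm{cl}_{M_u}(S) \subseteq \mathrm{cl}_{M_l}(S)$ for every $S \subseteq E$. Indeed, for $y \in \mathrm{cl}_{M_u}(S) \setminus S$ there is an $M_u$-circuit $C \subseteq S + y$ with $y \in C$; writing $C$ as a union of $M_l$-circuits and selecting one, $D$, that contains $y$, we get $D - y \subseteq S$ and hence $y \in \mathrm{cl}_{M_l}(D - y) \subseteq \mathrm{cl}_{M_l}(S)$.

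With this lemma in hand I would verify $(\Delta F)$ for $F_1, F_2 \in \mathcal{F}$ and $x \in F_1 \Deltab F_2$ in two symmetric cases. If $x \in F_1 \setminus F_2$, then $F_1 - x$ is automatically $M_u$-independent; if it is also $M_l$-spanning take $y = x$, and otherwise choose $y \in F_2 \setminus \mathrm{cl}_{M_l}(F_1 - x)$, which is nonempty because $F_2$ is $M_l$-spanning, lies in $F_2 \setminus F_1$, and by the lemma also avoids $\mathrm{cl}_{M_u}(F_1 - x)$, so that $F_1 \Deltab \{x,y\} = F_1 - x + y$ is feasible. If instead $x \in F_2 \setminus F_1$, then $F_1 + x$ is automatically $M_l$-spanning; if it is also $M_u$-independent take $y = x$, and otherwise let $C$ be the unique $M_u$-circuit of $F_1 + x$. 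Since $F_2$ is $M_u$-independent, $C \nsubseteq F_2$, so some $y \in (C - x) \setminus F_2 \subseteq F_1 \setminus F_2$ exists; deleting $y$ destroys $C$, so $F_1 + x - y$ is $M_u$-independent, while the cycle hypothesis places $y$ in an $M_l$-circuit contained in $C \subseteq F_1 + x$, whence $M_l$-spanning is preserved and $F_1 \Deltab \{x,y\} = F_1 + x - y$ is feasible.

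The main obstacle is precisely the coupling of the two constraints in each exchange: the naive choice of $y$ satisfying the binding constraint ($M_l$-spanning in the removal case, $M_u$-independence in the addition case) need not respect the other constraint, and it is exactly here that the cycle condition is indispensable, entering through the closure-containment lemma in the removal case and through the union-of-circuits structure of $C$ in the addition case. As a sanity check, I would confirm that the failure examples earlier in the section, where the chosen $y$ is forced to violate lower-spanning or upper-independence, are precisely the situations in which this coupling cannot be resolved.
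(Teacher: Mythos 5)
Your proposal is correct and follows essentially the same route as the paper: the same feasible collection (sets that are $M_l$-spanning and $M_u$-independent), the same two-case verification of the symmetric exchange axiom, and the same use of the union-of-circuits hypothesis to show the chosen $y$ simultaneously repairs spanning and preserves independence (or vice versa). Your closure-containment lemma $\mathrm{cl}_{M_u}(S)\subseteq\mathrm{cl}_{M_l}(S)$ is just a clean repackaging of the paper's ``$y$ lies in no lower cycle, so it can lie in no upper cycle'' argument --- and, pleasantly, it is exactly the standard quotient criterion that the paper only invokes later via Oxley in Corollary~\ref{coro:quo}.
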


{\em Proof.} We only have to show sufficiency. Suppose that every cycle in $M_u$ is a union of cycles in $M_l$, so that every basis of $M_l$ is independent in $M_u$, and every basis of $M_u$ is spanning in $M_l$. We will construct a $\Delta$-matroid realizing $M_u$ and $M_l$ as the upper and lower matroids. Set $\mathcal{F}$ to be the collection of all subsets of $E$ which are both upper independent and lower spanning. So, in particular, the bases of $M_l$ and $M_u$ are contained in $\mathcal{F}$.

Let $F, F' \in \mathcal{F}$ and let $x \in F \setminus F'$.  If $x$ is an element of a lower-cycle in $F$, then
chose $y = x$, and $F-x$ is both lower-spanning and upper independent, so feasible.
So suppose that $x$ belongs to no lower-cycle in $F$.
Then $F-x$ is not lower spanning, and, since $F'$ is lower spanning, there exists and element $y \in F' /F$ so that
$F - x + y$ is lower spanning.  Moreover, $y$ is contained in no lower cycle in $F - x + y$.
We know that $F$ is upper independent, and so $F-x+y$ is either upper independent, or contains a unique upper-cycle,
which contains $y$.  That upper-cycle is a union of lower-cycles, contradicting the fact that $y$ is  contained in no lower cycle in $F - x + y$.
So $F-x+y$ is lower-spanning and upper-independent.

Now we must consider the case when $x \in F' \setminus F$.
If $F + x$ is upper-independent,  we can choose $y = x$, and
$F \bigtriangleup \{x,y\} = F + \{x\}$ is upper-independent and lower-spanning, as required.
So suppose that $F + x$ is not upper-independent, and so has a unique upper-cycle $C$ containing
$x$.  Since $C \not \subseteq F'$, there exists an element $y \in F\setminus F'$ so that
$F + x - y$ is upper independent.  Moreover, since $y$ is contained in a lower-cycle $C'$ contained in the
lower-spanning set $F + x$, $F+ x - y$ is also lower spanning.

So $\mathcal{F}$ is the collection of feasible sets of a $\Delta$-matroid on $E$.
\hfill $\Box$

Consider a $\Delta$-matroid $D$ on a ground set $E$. From the symmetric exchange axiom it is clear that replacing the feasible sets $F$ with their complements $E \setminus F$ , yields another delta matroid $D^{\star}$ with feasible sets $\mathcal{F}^\star = \{E \setminus F | F \in \mathcal{F}\}$. Note
that $(M_l)^\star = (M^\star)_u$ and $(M_u)^\star = (M^\star)_l$ . For a subset $X$ of the ground set which is contained in some feasible set $F$, we define a $\Delta$-matroid $D \setminus X$, on the ground set $E \setminus X$, whose feasible sets are $\{F \setminus X | F \in \mathcal{F}\}$ and a $\Delta$-matroid $D/X = (D^\star \setminus X)^\star$.

Oxley \cite{oxley} defines a matroid $Q$ to be a quotient of a matroid $M$ if there is a matroid $N$ and a subset $X$ of the ground set of $N$ such that $M = N \setminus X$ and $Q = N/X$ and proves that $Q$ is a quotient of $M$ if and only if every circuit of $M$ is a union of circuits of $Q$, so we have the following.

\begin{corollary}\label{coro:quo}
Let $M_u$ and $M_l$ be matroids on the same ground set $E$. Then $M_u$
and $M_l$ are the upper and lower matroids of a $\Delta$-matroid if and only if $M_l$ is a quotient of $M_u$.
\end{corollary}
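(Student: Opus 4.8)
The plan is to obtain this corollary purely by chaining together two biconditionals that are already in hand, so that no new combinatorial argument is required. The first is the immediately preceding theorem, which asserts that $M_u$ and $M_l$ are the upper and lower matroids of a $\Delta$-matroid if and only if every cycle in $M_u$ is a union of cycles in $M_l$. The second is Oxley's characterization of quotients, quoted just above, stating that a matroid $Q$ is a quotient of a matroid $M$ exactly when every circuit of $M$ is a union of circuits of $Q$. Since both statements are ``if and only if'' statements, transitivity will deliver the full biconditional of the corollary, with no need to split into separate necessity and sufficiency arguments.

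First I would note that the paper's term \emph{cycle} (a minimal dependent set) is synonymous with Oxley's \emph{circuit}. With this identification I would apply Oxley's result with $M = M_u$ and $Q = M_l$: the assertion that $M_l$ is a quotient of $M_u$ is then equivalent to saying that every circuit of $M_u$ is a union of circuits of $M_l$, which is verbatim the condition ``every cycle in $M_u$ is a union of cycles in $M_l$.'' Substituting this equivalent formulation of the right-hand side into the preceding theorem rewrites that theorem as: $M_u$ and $M_l$ are the upper and lower matroids of a $\Delta$-matroid if and only if $M_l$ is a quotient of $M_u$, which is exactly the claim.

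The only point that needs care — and it is the closest thing here to an obstacle — is the bookkeeping of which matroid plays the role of $M$ and which plays the role of $Q$ in Oxley's statement. Because the quotient relation is not symmetric, it is essential to take $M = M_u$, whose circuits decompose, and $Q = M_l$; reversing the two would instead assert the (generally false) statement that $M_u$ is a quotient of $M_l$. This orientation is consistent with the ranks, since contraction lowers rank and $M_l$ is the lower matroid. Once this is fixed and the \emph{cycle}/\emph{circuit} vocabularies are matched, the proof is a direct concatenation of the two cited equivalences.
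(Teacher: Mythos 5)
Your proposal is correct and matches the paper's own derivation: the corollary is obtained exactly by chaining the preceding theorem's characterization (every cycle of $M_u$ is a union of cycles of $M_l$) with Oxley's circuit criterion for $M_l$ being a quotient of $M_u$. Your extra care about the orientation of the quotient relation is a sensible check, but otherwise the argument is the same as the paper's.
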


There may be many matroids $N$ on $E \cup X$ with the property that $M_l = N \setminus X$ and $M_u = N/X$. For our $\Delta$-matroid example on the edge set of a graph $G = (V, E)$ with upper matroid the $2$-dimensional generic rigidity matroid of $G$ and lower matroid the cycle matroid of $G$, Corollary \ref{coro:quo} implies that the connectivity matroid of a graph must be a quotient of its $2$-dimensional generic rigidity matroid. Geometrically, this does not seem obvious, however the following is a realization of this relationship. Given $G = (V, E)$, the cone of $G$, $G_c = (V\cup \{x_0\}, E\cup X)$ is obtained by adding a new vertex $x_0$ and $|V |$ new edges connecting the $x_0$ to each
vertex in $V$, so $X = \{(x, v) | v \in V \}$. See Figure \ref{fig:cone}.

\begin{figure}[htb]
\centering
\includegraphics{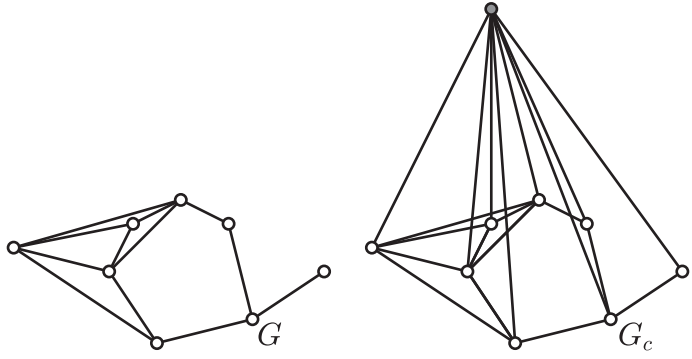}
\caption{A graph $G$ and its cone $G_c$. \label{fig:cone}}
\end{figure}

\begin{theorem}
Given a graph $G$, then its $2$-dimensional generic rigidity matroid
is $M_r(G) = M_r (G_c) \setminus X$ and its connectivity matroid is $M_c(G) = M_r (G_c)/X$.
\end{theorem}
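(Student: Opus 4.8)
The plan is to establish the two matroid identities separately, in both cases working directly from the combinatorial definition of $M_r$ used in the paper: a set of edges is independent exactly when it is not over-braced, i.e. when every nonempty subset $F'$ satisfies $|F'| \le 2|V(F')| - 3$ (Maxwell--Laman). Since this description is purely combinatorial, both identities reduce to counting.

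First I would dispatch the deletion identity $M_r(G) = M_r(G_c)\setminus X$. By definition $M_r(G_c)\setminus X$ is the restriction of $M_r(G_c)$ to the ground set $E$, so a set $F\subseteq E$ is independent in it precisely when $F$ is not over-braced as a set of edges of $G_c$. But the over-bracing condition on subsets $F'\subseteq F\subseteq E$ refers only to edges and vertices of $G$ — the cone vertex $x_0$ never lies in $V(F')$ since every edge of $E$ has both endpoints in $V$ — so it coincides verbatim with the condition defining independence in $M_r(G)$. Hence $M_r(G_c)\setminus X = M_r(G)$.

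The substance lies in the contraction identity $M_c(G) = M_r(G_c)/X$. I would first observe that $X$ is a star, hence a forest, hence independent in $M_r(G_c)$; since $X$ is disjoint from $E$, a set $F\subseteq E$ is therefore independent in $M_r(G_c)/X$ if and only if $F\cup X$ is independent in $M_r(G_c)$. It then suffices to prove that $F\cup X$ is $(2,3)$-sparse exactly when $F$ is a forest, since forests are precisely the independent sets of the cycle matroid $M_c(G)$. For the forward implication, if $F$ contains a cycle on $k$ vertices, those $k$ edges together with the $k$ cone edges joining $x_0$ to the cycle's vertices form a subset with $2k$ edges on $k+1$ vertices, and $2k > 2(k+1)-3 = 2k-1$, so $F\cup X$ is over-braced. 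For the converse I would assume $F$ is a forest and verify the count on an arbitrary nonempty subset $F'\subseteq F\cup X$ by splitting it into its $E$-part $F'_E = F'\cap F$ (a forest, so $|F'_E|\le |V(F'_E)| - 1$) and its cone-part $F'_X = F'\cap X$ (a star with feet $W_X\subseteq V$, so $|F'_X| = |W_X|$). When $x_0\notin V(F')$ the set $F'$ is a forest and the count is immediate; when $x_0\in V(F')$ one has $|V(F')| = |V(F'_E)\cup W_X| + 1$, and the bounds $|F'_E|\le |V(F')|-2$ and $|F'_X| = |W_X|\le |V(F')|-1$ combine to give $|F'| \le 2|V(F')| - 3$. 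This yields $M_r(G_c)/X = M_c(G)$.

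The main obstacle is precisely this converse subset count — the assertion that coning a forest produces a $(2,3)$-sparse, hence independent, graph. I expect the clean route is the case split above (on whether $x_0\in V(F')$ and whether $F'_E$ is empty), carefully tracking how $V(F'_E)$ and $W_X$ overlap inside $V$. As a sanity check, a spanning tree $F$ of a connected $G$ gives $|F\cup X| = 2|V| - 1 = 2|V(F\cup X)| - 3$, so the cone over a spanning tree is isostatic, as it must be. Alternatively, one may recognize the whole contraction identity as Whiteley's coning theorem specialized to dimension $d=1$, once one notes that the generic $1$-dimensional rigidity matroid of a graph is exactly its cycle matroid; this gives the result with no computation at the cost of importing the coning machinery, whereas the direct argument above keeps the proof self-contained within the paper's sparsity framework.
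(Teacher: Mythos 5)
Your proposal is correct, and it takes a recognizably different route from the paper's, even though both ultimately rest on the same two facts about cones. The paper argues entirely in terms of circuits: it describes the circuits of $M_r(G_c)/X$ as the minimal nonempty sets $C\setminus X$ with $C$ a rigidity circuit of $G_c$, and then identifies these minimal sets with the connectivity cycles by \emph{asserting} (without proof) that the cone of a connectivity cycle is a rigidity circuit and that the cone of an acyclic graph is rigidity-independent. You instead work with independent sets: you reduce the contraction identity to the statement that $F\cup X$ is $(2,3)$-sparse iff $F$ is a forest (legitimate, since $X$ is independent in $M_r(G_c)$ and disjoint from $E$), and then you actually prove both directions by Laman counts --- the cycle-plus-cone count $2k > 2(k+1)-3$ for dependence, and the subset count splitting $F'$ into $F'_E$ and $F'_X$ for independence. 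Your counting in the main case is right: $|F'_E|\le |V(F')|-2$ and $|F'_X|\le |V(F')|-1$ sum to $2|V(F')|-3$, and you correctly flag that the degenerate cases $x_0\notin V(F')$ and $F'_E=\emptyset$ need separate (easy) treatment since the forest bound $|F'_E|\le |V(F'_E)|-1$ fails for the empty set. The net effect is that your version is more self-contained: it stays inside the sparsity definition the paper itself adopts and supplies the verifications the paper leaves implicit, at the cost of being longer; the paper's circuit-level argument is slicker but leans on unproved claims about cones of cycles and forests (and, in the case of the circuit claim, on a slightly stronger fact --- minimal dependence rather than mere dependence --- than your argument needs).
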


\begin{proof}
Clearly $M_r(G) = M_r(G_c) \setminus X$, so we need only show $M_c (G) = M_r (G_c )/X$. The circuits of $M_r(G _c)/X$ are the minimal sets in
$$
D=\big\{C\setminus N | C\in\mathcal{C}(M_r(G_c))\big\}.
$$
Since the cone of every connectivity cycle is a circuit in the $2$-dimensional generic rigidity matroid, the minimal sets in $D$ can contain at most the edges of a single connectivity cycle.

On the other hand, since the cone of an acyclic graph is independent in the
$2$-dimensional generic rigidity matroid, every edge set in $D$ contains at least one connectivity cycle.

So the minimal elements of D are exactly the connectivity cycles of the graph
$G = (V, E)$, and $M_c(G) = M_r (G_c )/X$.
\end{proof}

If we are given the upper and lower matroids $M_l$ and $M_u$  of a $\Delta$-matroid $D$, we may wonder if there is a way to construct a $\Delta$-matroid $D_{min}$ (resp $D_{max}$)  with the least (resp maximum) number of feasibles and the same upper and lower matroids as $D$. 

Here we consider the special cases where the upper or lower matroid is the uniform matroid. Let $\mathcal{F}_{max}=\mathcal{B}_l\cup\mathcal{I}_{max}(D)$,
$\mathcal{F}'_{max}=\mathcal{B}_l\cup\mathcal{I}'_{max}(D)$ with
\begin{eqnarray*}
\mathcal{I}_{max}(D)&=&\{A\subseteq E\, |\,  \mbox{rk}(M_l)<|A|\leq\mbox{rk}(M_u), \, \cr && \mbox{ $A$ is spanning in $M_l$}\};\cr
\mathcal{I'}_{max}(D)&=&\{A\subseteq E\, |\,  \mbox{rk}(M_l)\leq|A|<\mbox{rk}(M_u), \, \cr && \mbox{ $A$ is independent in $M_u$}\}.
\end{eqnarray*}

\begin{theorem} Let $D=(E, \mathcal{F}(D))$ be a $\Delta$-matroid.
\begin{itemize}
\item If the upper matroid of $D$ is uniform, $D_{max}=(E, \mathcal{F}_{max})$ is the $\Delta$-matroid with maximum number of feasibles satisfying $M_l(D_{max})=M_l$.
\item If the lower matroid of $D$ is uniform,  $D'_{max}=(E, \mathcal{F}'_{max})$ is the $\Delta$-matroid with maximum number of feasibles satisfying $M_u(D'_{max})=M_u$.
\end{itemize}
\end{theorem}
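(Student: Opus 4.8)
The plan is to reduce both statements to the characterization theorem of the previous section together with Theorem~\ref{uplow}, so that the only genuine work is a cardinality bookkeeping that identifies the collections $\mathcal{F}_{max}$ and $\mathcal{F}'_{max}$ with the largest admissible feasible collection. First I would isolate the general principle behind both bullets, and only afterward feed in the uniformity hypothesis.

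For an arbitrary $\Delta$-matroid $D$ with upper matroid $M_u$ and lower matroid $M_l$, set
\[
\mathcal{G} = \{A \subseteq E \mid A \mbox{ is independent in } M_u \mbox{ and spanning in } M_l\}.
\]
By Theorem~\ref{uplow}, every feasible set of \emph{every} $\Delta$-matroid whose upper and lower matroids are $M_u$ and $M_l$ lies in $\mathcal{G}$, so $\mathcal{G}$ dominates, under inclusion, all such feasible collections. Conversely, since $D$ itself exists, the necessary condition that every cycle of $M_u$ be a union of cycles of $M_l$ holds, so $\mathcal{G}$ is exactly the collection built in the proof of the characterization theorem and hence satisfies the symmetric exchange axiom. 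A one-line rank count then shows that the members of $\mathcal{G}$ of maximum cardinality $\mbox{rk}(M_u)$ are precisely the bases of $M_u$ and those of minimum cardinality $\mbox{rk}(M_l)$ are precisely the bases of $M_l$, so the upper and lower matroids of $\mathcal{G}$ are indeed $M_u$ and $M_l$. Thus $\mathcal{G}$ is the unique $\Delta$-matroid with these extreme matroids carrying the maximum number of feasibles. (Because upper-independence only ever supplies a cardinality bound here, the same domination in fact covers any $\Delta$-matroid with lower matroid $M_l$ whose maximum feasible cardinality equals $\mbox{rk}(M_u)$.)

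Next I would specialize to a uniform upper matroid. If $M_u$ is uniform then ``$A$ independent in $M_u$'' is equivalent to $|A| \le \mbox{rk}(M_u)$, so $\mathcal{G}$ reduces to the lower-spanning sets $A$ with $\mbox{rk}(M_l) \le |A| \le \mbox{rk}(M_u)$. Splitting off the members of cardinality exactly $\mbox{rk}(M_l)$, which are lower-spanning of minimum size and hence exactly the bases $\mathcal{B}_l$, identifies $\mathcal{G}$ with $\mathcal{B}_l \cup \mathcal{I}_{max}(D) = \mathcal{F}_{max}$, which is the first bullet. This also pinpoints the role of the hypothesis: uniformity is exactly what converts upper-independence into the pure cardinality bound in the definition of $\mathcal{I}_{max}(D)$. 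Without it, $\mathcal{I}_{max}(D)$ could contain lower-spanning sets of size $\mbox{rk}(M_u)$ that are dependent in $M_u$, which would enlarge the upper matroid and break the identification with $\mathcal{G}$.

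Finally I would obtain the second bullet by duality rather than reproving symmetric exchange. The complementation $D \mapsto D^\star$ preserves the number of feasible sets and, as recorded in the text, interchanges the extreme matroids via $(M_l)^\star = (M^\star)_u$ and $(M_u)^\star = (M^\star)_l$; since the dual of a uniform matroid is uniform, the hypothesis ``$M_l$ uniform'' for $D$ becomes ``upper matroid uniform'' for $D^\star$. Applying the first bullet to $D^\star$ and complementing back produces the maximal $\Delta$-matroid with upper matroid $M_u$, whose feasible collection is again $\mathcal{G}$; the uniform-lower cardinality split now separates off the members of maximum size, giving $\mathcal{B}_u \cup \mathcal{I}'_{max}(D)$ anchored at the bases $\mathcal{B}_u$ of the upper matroid (which is what realizing $M_u(D'_{max}) = M_u$ forces, and I would flag the displayed $\mathcal{B}_l$ there as a slip for $\mathcal{B}_u$). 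I expect no serious obstacle beyond this last point: the whole argument rests on Theorem~\ref{uplow} and the characterization theorem, and the only delicate step is the boundary bookkeeping at the cardinalities $\mbox{rk}(M_l)$ and $\mbox{rk}(M_u)$, where the bases of the extreme matroids must be cleanly separated from the intermediate feasible sets.
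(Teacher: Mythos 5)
Your proposal is correct, but it takes a genuinely different route from the paper. The paper proves the first bullet by a direct, case-by-case verification of the symmetric exchange axiom for $\mathcal{F}_{max}$: it writes each member of $\mathcal{I}_{max}(D)$ as a lower basis plus a surplus set (via Theorem~\ref{uplow}) and produces the exchange element $y$ explicitly in each configuration using the basis exchange axiom (MB), then invokes duality for the second bullet. You instead observe that the collection $\mathcal{G}$ of upper-independent, lower-spanning sets is exactly the collection shown to satisfy symmetric exchange in the proof of the characterization theorem (whose hypothesis holds because $D$ exists), that Theorem~\ref{uplow} forces every competitor's feasible collection into $\mathcal{G}$, and that uniformity of $M_u$ collapses upper-independence to a cardinality bound so that $\mathcal{G}=\mathcal{B}_l\cup\mathcal{I}_{max}(D)$. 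Your route is shorter, avoids redoing the exchange analysis, and --- importantly --- actually proves the maximality claim, which the paper's proof never addresses (it only verifies that $\mathcal{F}_{max}$ satisfies ($\Delta$F)). Two caveats are worth keeping explicit: (i) as your parenthetical concedes, maximality can only be asserted relative to $\Delta$-matroids whose upper matroid is $M_u$ (equivalently, whose feasible sets all have cardinality at most $\mbox{rk}(M_u)$); read literally, ``maximum among all $\Delta$-matroids with lower matroid $M_l$'' is false, since the spanning sets of $M_l$ form a strictly larger such $\Delta$-matroid whenever $\mbox{rk}(M_u)<|E|$ --- this is a defect of the statement, not of your argument; and (ii) your flag of $\mathcal{B}_l$ as a slip for $\mathcal{B}_u$ in the definition of $\mathcal{F}'_{max}$ is right --- as printed, $\mathcal{F}'_{max}$ omits the bases of $M_u$ and so could not have upper matroid $M_u$.
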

\begin{proof}
We will only prove the first assertion of this theorem because duality can recover the second.

Let $F_1,  F_2\in \mathcal{F}_{max}$, $x\in F_1\Delta F_2$ and let us search for $y\in F_1\Delta F_2$ such that the ($\Delta$F) axiom holds.  There is nothing to prove if $F_1, F_2\in \mathcal{B}_l$; $F_1, F_2\in \mathcal{B}_u$ or $F_1\in \mathcal{B}_l, F_2\in \mathcal{B}_u$  since the ($\Delta$F) axiom is satisfied on elements of the set $\mathcal{F}(D)$.

Assume that $F_1\in \mathcal{B}_l$, $F_2\in \mathcal{I}_{max}(D)\setminus \mathcal{B}_u$ and $x\in F_1\setminus F_2$. From Theorem \ref{uplow},  there exists $K^2_l\subset E$ and $F^2_l\in \mathcal{B}_l$ such that $F_2=F^2_l\cup K^2_l$. As a result, $x\in F_1\setminus F^2_l$ will yield an $x'\in F^2_l\setminus F_1$ such that $F_1-x+x'\in \mathcal{B}_l$ according to the (MB) axiom and then $x'\in F_2\setminus F_1$ allows to take $y=x'$. Since $|F_2|<\mbox{rk}(M_u)$, $F_2+x\in  \mathcal{I}_{max}(D)$,  we can take $y=x\in F_1\Delta F_2$ such that $F_2\Delta \{x\}\in  \mathcal{I}_{max}(D)$. Assume we begin by picking $x\in F_2\setminus F_1$. We can clearly take $y=x$ to get $F_1+x\in  \mathcal{I}_{max}(D)$. We can still use $y=x$  in case $x\in K^2_l$ to obtain $F_2-x=F^2_l\cup (K^2_l-x)\in  \mathcal{I}_{max}(D)$. If $x\in F^2_l$ then $x\in F^2_l\setminus F_1$ which according to the (MB) axiom will give an $x'\in F_1\setminus F^2_l$ such that $F^2_l-x+x'\in \mathcal{B}_l$. If $x'\notin K^2_l$, we can simply take $y=x'$ because $x'\in F_1\setminus F_2$ and $F_2-x+x'=(F^2_l-x+x')\cup K^2_l\in \mathcal{I}_{max}(D)$. If $x'\in K^2_l$, $F_2-x=(F^2_l-x+x')\cup (K^2_l-x')\in \mathcal{I}_{max}(D)$ and we can choose $y=x$.

Assume that  $F_1, F_2\in \mathcal{I}_{max}(D)$. Applying again Theorem \ref{uplow},  we can write $F_1=F^1_l\cup K^1_l, F_2=F^2_l\cup K^2_l$;  for $F^1_l, F^2_l\in \mathcal{B}_l$ and $K^1_l, K^2_l$ subsets of $E$. 

If $x\in K^1_l$, we can take $y=x$ since $F^1_l\Delta \{x\}=F\setminus x=F^1_l\cup(K^1_l\setminus x)\in \mathcal{I}_{max}(D)$. Otherwise, $x\in F^1_l\setminus F_2$ and this implies that $x\in F^1_l\setminus F^2_l$ which by the (MB) axiom gives $x'\in  F^2_l\setminus F^1_l$ such that $F^1_l-x+x'\in \mathcal{B}_l$. In the case $x'\in K^1_l$, we can take $y=x$ as $F_1-x=(F^1_l-x+x')\cup K^1_l\in \mathcal{I}_{max}(D)$ and if $x'\notin K^1_l$  then $x'\in F_2\setminus F_1$ and we take $y=x'$. Furthermore if $|F_2|<\mbox{rk}(M_u)$ than we can take $y=x$ as $|F_2\cup\{x\}|\leq \mbox{rk}(M_u)$ and then $F_2\Delta\{x\}\in \mathcal{I}_{max}(D)$. 

Assume that $|F_2|=\mbox{rk}(M_u)$. In this case, $F_2\in \mathcal{B}_u$, and because $|F_1|\leq\mbox{rk}(M_u)$ there is a subset $K$ of $E$ such that $F_1\cup K\in \mathcal{B}_u$ and $x\in (F_1\cup K)\setminus F_2$ implies that there is $x'\in F_2\setminus (F_1\cup K)$ such that $F_2-x'+x\in \mathcal{B}_u$ from the (MB) axiom. We have just found $y=x'\in F_2\setminus F_1$ such that $F_2-x'+x\in \mathcal{B}_u$.
\end{proof}

We now use an example to illustrate the previous theorem. Let $E=\{a, b, c, d\}$
and $\mathcal{F}$ be the subset of the power set of $E$ containing $M_u=\{\{a, b, c, d\}\}$ and  all the one element and two element subsets of $E$. Clearly $D=(E, \mathcal{F})$ is a $\Delta$-matroid. Adding to $\mathcal{F}$ all the three elements subsets of $E$ gives the maximal $\Delta$-matroid having the same lower matroid and upper matroid with $D$.

We now wonder what happens if we remove the requirement that the lower and upper matroids be uniform. This is a question that will be investigated further in the future. Furthermore, the question of how to find $D_{min}$ remains unanswered. 
If the matroids $D_{min}$ and $D_{max}$ exist, are they uniquely defined?

\vspace{0.5cm}

\end{document}